\pdfoutput=1
\documentclass{amsart}
\usepackage{amsrefs}
\usepackage{hyperref}
\usepackage{amssymb}
\usepackage{graphicx,color}

\theoremstyle{plain}
\newtheorem{theorem}{Theorem}[section]

\newtheorem{lemma}[theorem]{Lemma}

\theoremstyle{definition}
\newtheorem{definition}{Definition}

\newtheorem{example}{Example}

\theoremstyle{remark}
\newtheorem{remark}{Remark}

\newcommand{\StTwo}{St_{(2)}}
\newcommand{\StOne}{St_{(1)}}

\newcommand{\ind}{\operatorname{ind}}
\newcommand{\sgn}{\operatorname{sgn}}

\title[Slice formula for surface invariants]
{An explicit slice formula for surface invariants via curve invariants}
\author{Noboru Ito}
\address{Department of Mathematics, Faculty of Engineering, Shinshu University, Wakasato 4-17-1, Nagano, Nagano 380-8553, Japan}
\email{nito@shinshu-u.ac.jp}
\author{Hiroki Mizuno}
\address{Department of Science and Technology, Graduate School of Medicine, Science and Technology, Shinshu University, Asahi 3-1-1, Matsumoto, Nagano 390-8626, Japan}
\email{22hs602j@shinshu-u.ac.jp}
\date{April 5, 2026}

\begin{document}
\subjclass[2020]{57K40, 57R42, 57K31}
\keywords{surface immersions, slice formula, finite-order invariants, quadruple-point, curve invariants}
\begin{abstract}
We give an explicit slice formula for a surface invariant of generic immersions in $\mathbb{R}^3$, expressed in terms of curve invariants arising from planar slices.
Using a motion-picture viewpoint, we introduce differential measures that record local changes of the curve invariant $\StOne$ and the surface invariant $\StTwo$ across singular slice transitions.
Our main result shows that, for a quadruple-point event, if $j$ denotes the number of outward coorientations before the event, then the change of the surface invariant satisfies $d\StTwo = 2j - 4$.
This yields a computable and combinatorial description of the surface invariant via slice data.
In particular, the formula makes explicit the relation between curve-level invariants and finite-order invariants of surface immersions in the sense of Nowik.
\end{abstract}

\maketitle
\section{Introduction}\label{Intro}

The study of topological invariants of curves and surfaces has played a central role
in low-dimensional topology and singularity theory.
Among them, Arnold-type invariants, originally introduced for plane curves
\cite{Arnold1994Book}, provide powerful tools for describing how numerical data change
when a regular homotopy crosses codimension-one strata of the discriminant,
such as self-tangency and triple-point transitions.
On the surface side, Nowik developed a systematic framework of finite-order invariants
for surface immersions, classifying codimension-one singularity events
and defining universal order-one invariants whose jumps are prescribed
at these events \cite{Nowik2004, Nowik2006AM, Nowik2006PJM}.
Our results provide an explicit and computable slice-level relation between curve-type and surface-type invariants. To our knowledge, such a relation has not previously been formulated in the framework of finite-order invariants of surface immersions.

\medskip
\noindent
\textbf{Main results.}
In this paper we study a slice-theoretic relation between Arnold-type invariants
of plane curves and surface immersions in $\mathbb{R}^3$.
Using a motion-picture viewpoint, we introduce differential measures
that describe local changes of these invariants across singular slice transitions.
This description is independent of auxiliary choices and depends only on the local combinatorics of the configuration.
Our main result gives an explicit slice formula describing the change
of the surface strangeness invariant under a quadruple-point event.
More precisely, we show that this change is completely determined
by the number of outward coorientations in the local tetrahedral configuration.
This provides a combinatorial and computable description of the surface invariant
in terms of slice data and makes explicit the relation between
Arnold-type invariants of curves and surface invariants.
In particular, it clarifies the relation between classical Arnold-type invariants
and finite-order invariants of surface immersions in the sense of Nowik.

The construction developed in this paper can be viewed as a combinatorial mechanism
relating finite-order invariants of surface immersions to slice-wise behavior of plane curve invariants.
From this perspective, the slice formula may be regarded as a local-to-global principle,
analogous in spirit to constructions appearing in the theory of finite-order invariants.
We expect that this viewpoint can be extended to more general classes of invariants
or higher-dimensional settings.

For the curve side, we introduce a new basepoint-independent invariant $\StOne$
for oriented multi-component plane curves, defined purely in terms of coorientation data via the induced Alexander numbering.
This invariant serves as the $1$-dimensional counterpart of the surface invariant $\StTwo$
in our slice-theoretic formulation.

\medskip
\noindent
\textbf{Differential slice calculus.}
The key technical ingredient underlying our construction
is a notion of \emph{differential measure} that captures local differences
of Arnold-type invariants between adjacent slices of a surface.
The need for such a device arises from the fact that, while a surface immersion
may be locally regular as a $2$-immersion, the corresponding $1$-dimensional slices
typically undergo singular transitions.
In particular, triple-point crossings for plane curves correspond to
quadruple-point events for surfaces.

We introduce differential measures describing these local slice transitions
and establish explicit slice formulas relating the differential measures
for $1$-immersions and $2$-immersions (Theorems~\ref{thm:DiffOneTwo} and~\ref{thm:SliceF}).

Throughout the paper, we fix the sign convention so that the positive direction
of a $\textup{Q}$-event corresponds to an increase in the number of outward coorientations.

In the curve case, the differential measure associated with a Reidemeister move
of type~$\Omega_3$ is determined by the number of outward coorientations
of the vanishing triangle:
\[
d\StOne(\Omega_3)=2j-3,
\]
where $d\StOne$ denotes the change of the invariant $\StOne$ and
$j$ is the number of outward coorientations before the move.

Thus the differential measure is expressed directly in terms of
combinatorial data of the local configuration.

For surfaces, we prove that the change of $\StTwo$ under a quadruple-point event $\textup{Q}$ is likewise determined by the number of outward coorientations:
\[
d\StTwo(\textup{Q}) = 2j - 4,
\]
where $j$ denotes the number of outward coorientations of the local tetrahedral configuration
before the event.

In the case of $\textup{Q}^2$, the number $j$ of outward coorientations
does not change across the event.
Hence one has
\[
d\StTwo(\textup{Q}^2)=0.
\]

\medskip
\noindent
\textbf{Structure of the paper.}
Section~\ref{sec:def-not} reviews Arnold-type invariants for surfaces
and codimension-one singularity events $\textup{E}$, $\textup{H}$, $\textup{T}$,
and $\textup{Q}$.
Section~\ref{sec:diff} introduces differential measures
and proves slice formulas relating triple-point transitions of curves
and quadruple-point events of surfaces
(Theorems~\ref{thm:DiffOneTwo} and~\ref{thm:SliceF}).
Section~\ref{sec:example} provides examples and explicit computations.

\section{Definitions and notations}\label{sec:def-not}

\begin{definition}[immersion]
A smooth immersion of a closed oriented surface into $\mathbb R^3$ is said to be \emph{generic} if its image locally exhibits only transverse intersections of two or three smooth sheets.
\end{definition}

When no confusion is likely, we identify a generic immersion with its image.

\begin{definition}[event]
Two generic immersions $f$ and $g$ are said to be \emph{generically regularly homotopic} if they can be transformed by a finite sequence of diffeomorphisms and the four local singularity events $\textup{E}$, $\textup{H}$, $\textup{T}$, and $\textup{Q}$.
\end{definition}

\begin{itemize}
  \item[$\textup{(E)}$] elliptic tangency of two sheets (Figure~\ref{fig:Emove});
  \item[$\textup{(H)}$] hyperbolic tangency of two sheets (Figure~\ref{fig:Hmove});
  \item[$\textup{(T)}$] (for `triple'), tangency of the line of intersection of two sheets to another sheet (Figure~\ref{fig:Tmove});
  \item[$\textup{(Q)}$] (for `quadruple'), four sheets intersecting at the same point (Figure~\ref{fig:Qmove}).
\end{itemize}

\begin{figure}[htbp]
    \includegraphics[width=10cm]{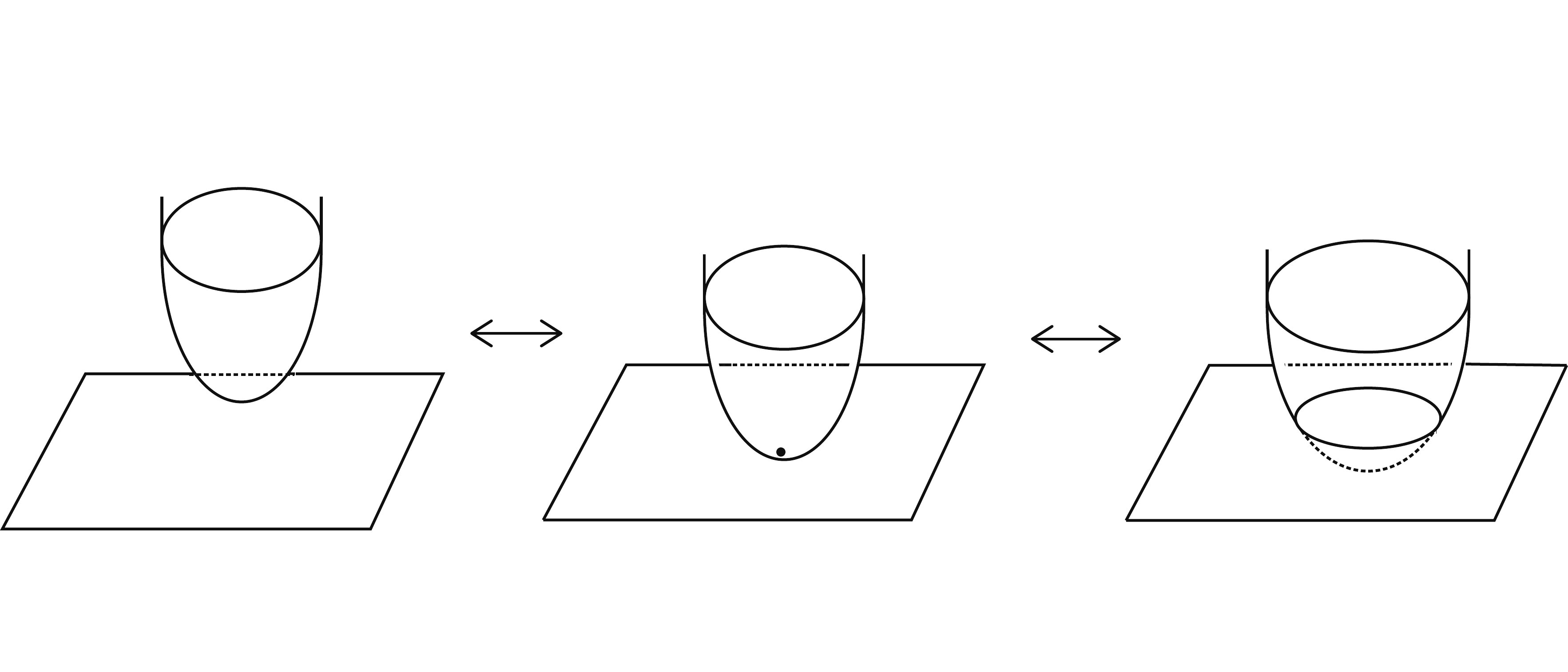}
    \vspace{-5mm}
    \caption{$\textup{(E)}$ : Elliptic tangency of two sheets.}
    \label{fig:Emove}
\end{figure}

\begin{figure}[htbp]
    \includegraphics[width=10cm]{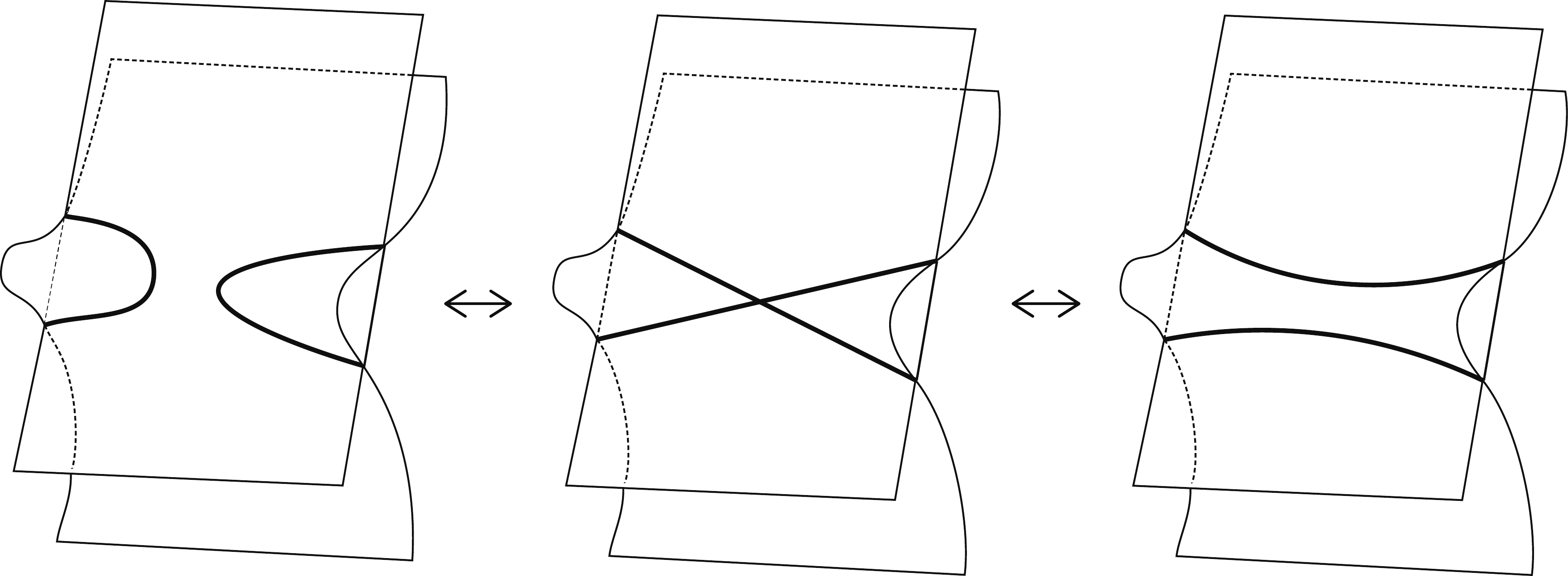}
    \caption{$\textup{(H)}$ : Hyperbolic tangency of two sheets.}
    \label{fig:Hmove}
\end{figure}

\begin{figure}[htbp]
    \includegraphics[width=10cm]{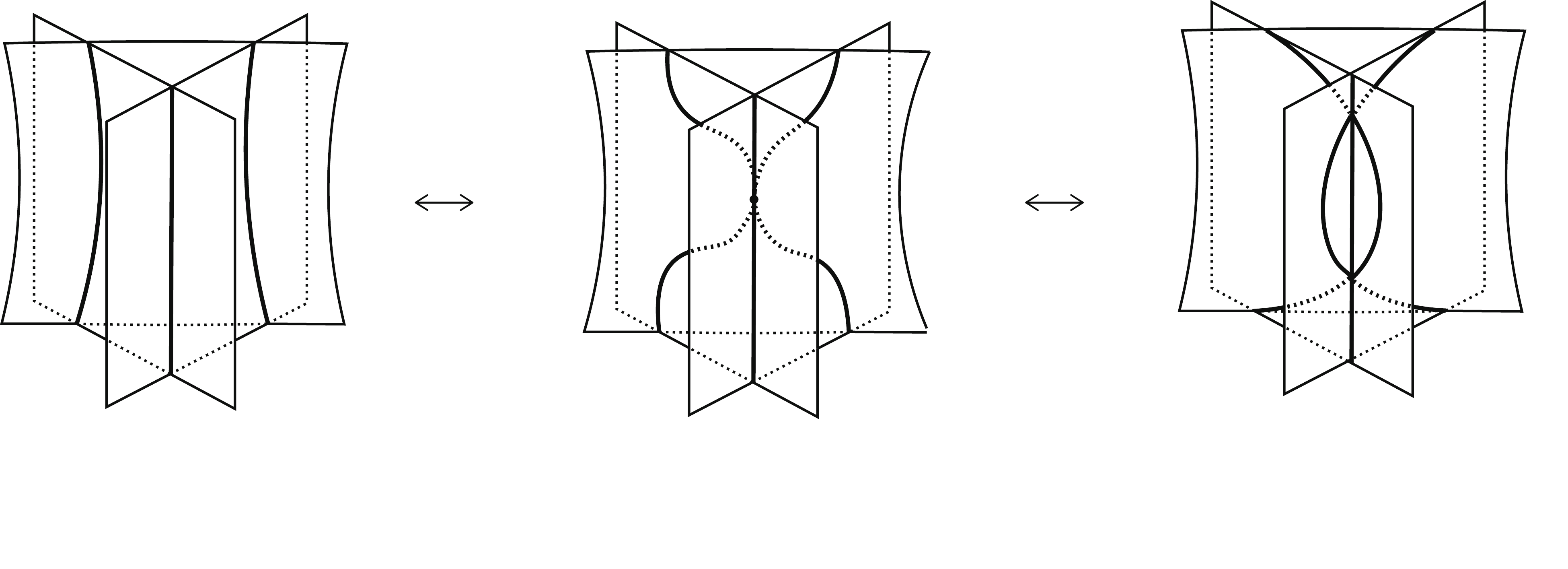}
    \vspace{-4mm}
    \caption{$\textup{(T)}$ : Tangency of the line of intersection of two sheets to another sheet.}
    \label{fig:Tmove}
\end{figure}

\begin{figure}[htbp]
    \includegraphics[width=10cm]{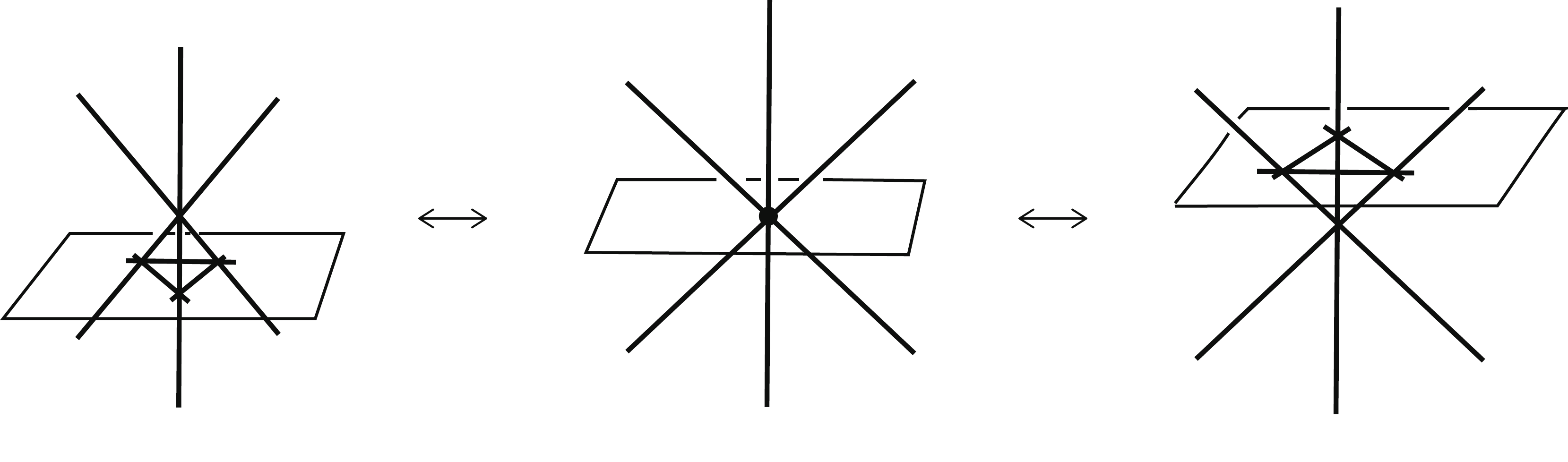}
    \caption{$\textup{(Q)}$ : A sheet crossing over a triple point formed by three other sheets. For the quadruple point, four sheets intersect at the same point.}
    \label{fig:Qmove}
\end{figure}

Let us fix an orientation of a closed surface $\Sigma$, and let $f:\Sigma \looparrowright \mathbb{R}^3$ be an immersion.
The orientation of $\Sigma$ induces an orientation on the image of $f$.
Moreover, if the target space $\mathbb{R}^3$ is oriented, then the image of $f$ is \emph{cooriented}.
Consequently, the coorientation is naturally established on the immersed surface as well.
Using this coorientation allows us to refine these singularity events.
The following content is based on \cite{Goryunov1997, Nowik2004}.
We recall \emph{codimension-one points}.

\begin{definition}[codimension-one points and their coorientation]
An event at a local codimension-one singularity, i.e., $\textup{E}$, $\textup{H}$, $\textup{T}$, or $\textup{Q}$, will be called a \emph{codimension-one point}.
By codimension-one point we also refer to the point in $\mathbb{R}^3$ where this event takes place.
By choosing an orientation of $\Sigma$ in $\mathbb{R}^3$, the events $\textup{E}$, $\textup{H}$, $\textup{T}$, and $\textup{Q}$ split into 12 types:
$\textup{E}^j$ ($j=0,1,2$), $\textup{H}^j$ ($j=+,-$), $\textup{T}^j$ ($j=0,1,2,3$), and $\textup{Q}^j$ ($j=2,3,4$).

The notion of outer and inner coorientation is determined globally
by the orientation of the surface $\Sigma$ in $\mathbb{R}^3$.  A part is called \emph{positive} if it has the outer coorientation.
We call the corresponding orientation also \emph{positive}
(resp.\ \emph{negative}) when it agrees with the outer
(resp.\ inner) orientation.

\begin{itemize}
\item[$\textup{E}^j$] For an event $\textup{E}$, let $j$ be the number of positive parts in the appearing sphere.
\item[$\textup{H}^j$] For an event $\textup{H}$, we set $\textup{H}=\textup{H}^+$ (resp.\ $\textup{H}=\textup{H}^-$) if the coorientations of the two sheets presenting the event $\textup{H}$ coincide (resp.\ do not coincide).
\item[$\textup{T}^j$] For an event $\textup{T}$, let $j$ be the number of positive parts in the appearing sphere.
\item[$\textup{Q}^j$] For an event $\textup{Q}$, let $4-j$ be the smaller number
of positive parts in the appearing sphere.
\end{itemize}
\end{definition}

Let $S:\Sigma \looparrowright \mathbb{R}^3$ be a generic immersion.
Then $\mathbb{R}^3$ is subdivided into cells.
The image decomposes into $2$-cells called \emph{sheets} (locally smooth pieces), $1$-cells called \emph{double lines} (pairwise intersections of sheets), and $0$-cells called \emph{triple points} (intersections of three sheets).
Furthermore, a $3$-cell in this subdivision is called a \emph{region}.

Using the coorientation of its image, we assign numbers to the regions; this assignment is called the \emph{Alexander numbering}.
The specific steps are as follows.
First, we assign $-\frac{3}{2}$ to the region containing the point at infinity.
Next, we increase or decrease the number along the coorientation of the image of $S$.
In this way, numbers are distributed to the regions.
The resulting map from regions to half-integers,
\[
\ind : \{\text{regions}\}\to \mathbb{Z}+\tfrac12
\]
is called the \emph{Alexander numbering}, and its values are called the \emph{indices of regions}.

\begin{definition}[index for triple points and $T(S)$]
Let $\Sigma$ be a closed oriented and cooriented surface, let $S$ be a generic immersion from $\Sigma$ to $\mathbb{R}^3$, and let $T(S)$ be the set of triple points of $S$.
By averaging the values of the Alexander numbering over the eight regions locally adjacent to a triple point, we define a function
\[
T(S)\to \mathbb{Z};\quad t\mapsto \ind(t).
\]
The value $\ind(t)$ is called the \emph{index} of a triple point $t$.
Similarly, we define the indices of sheets and double lines by averaging the Alexander numbering of the regions locally adjacent to them.
If $e$ is a double line and $f$ is a sheet of $S$, we denote their indices by $\ind(e)$ and $\ind(f)$, respectively.
\end{definition}

The definition of the index of a triple point will be used in the definition of the invariant $\StTwo$.

\begin{definition}[$\StTwo$]
Let $\Sigma$ be a closed oriented and cooriented surface, let $S:\Sigma \looparrowright \mathbb{R}^3$ be a generic immersion, and let $T(S)$ be the set of triple points of $S$.
Then
\[
\StTwo(S) = \sum_{t \in T(S)} \ind(t).
\]
\end{definition}

We now define the invariant $\StOne$ for oriented multi-component plane curves.
This invariant is independent of the choice of basepoint.

\begin{definition}[$\StOne$]\label{def:StOne}
Let $C$ be a generic immersion from $S^1 \sqcup S^1 \sqcup \cdots \sqcup S^1$ to $\mathbb{R}^2$.
We identify $C$ with its image when no confusion arises.

For an oriented generic multi-component plane curve, we assign to each double point an index defined as the average of the Alexander numberings of the adjacent regions.
We define
\[
\StOne(C):=\sum_{d \in D(C)} \ind(d), 
\]
where $D(C)$ denotes the set of double points of the image of $C$.  
\end{definition}

\section{Differential Measures and Slice Formulas}\label{sec:diff}

\begin{definition}[Morse surface immersion]
In order to investigate the relation between $\StTwo$ and $\StOne$, we consider cross-sections taken in neighborhoods of singular points on the surface.

Let $S:\Sigma \looparrowright \mathbb{R}^3$ be a generic immersion and place its image in a position so that the height function behaves like a Morse function.
A generic immersion arranged in this manner is called a \emph{Morse surface immersion}.

Let $z$ denote the height coordinate, and define the cross-section at level $z=s$ by
\[
A_s=S\cap\{z=s\}.
\]

Thus $\{A_s\}_{s\in\mathbb{R}}$ forms a one-parameter family of curves obtained by slicing the surface by the horizontal plane $z=s$.
We refer to this horizontal plane as the slice plane $\underline{\mathbb{R}^2}$.
\end{definition}
\begin{remark}[Independence of orientation of the slice curve]
The invariant $\StOne$ is defined using the Alexander numbering
induced by the coorientation of the surface,
and does not require an orientation of the slice curve $A_s$ itself.
In particular, the construction depends only on coorientation data
and local combinatorics of the configuration.
\end{remark}
\begin{definition}[index of slice plane]\label{def:indslice}
We describe the possible local index patterns arising on the slice plane near a vanishing triangle.
The indices at the $0$-cells are of one of the following two types:

\begin{itemize}
\item weak type
\[
(i,i,i+1)\longleftrightarrow(i,i+1,i+1),
\]

\item strong type
\[
(i,i,i)\longleftrightarrow(i+1,i+1,i+1).
\]
\end{itemize}

In both cases we denote the index by $i$.

Similarly, the indices at the $2$-cells of the vanishing triangle are of one of the following two types:

\begin{itemize}
\item weak type
\[
i\longleftrightarrow i+1,
\]

\item strong type
\[
i\longleftrightarrow i+3.
\]
\end{itemize}

Here all indices are computed on the slice plane.  
In particular, the indices of a vanishing triangle are computed on the slice $A_s$.
\end{definition}

\begin{definition}[differential measure $d\StOne$ for $\StOne$]

Let $\Omega_3$ be a Reidemeister move of type~$3$ for an oriented plane curve.
We define

\[
d\StOne(\Omega_3)
:=
\StOne(\text{after})-\StOne(\text{before}).
\]

\end{definition}

\begin{lemma}\label{lem:StOne}

Let $j$ be the number of outward coorientations of the vanishing triangle
before the move.
Then

\[
d\StOne(\Omega_3)=2j-3 .
\]

\end{lemma}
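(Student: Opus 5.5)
The plan is to show that an $\Omega_3$ move changes $\StOne$ only through the three double points of the vanishing triangle. For each of these points I will show that its index changes by exactly $\pm 1$, with the sign read off from the coorientation of the opposite side of the triangle. Adding the three contributions then gives $j\cdot(+1)+(3-j)\cdot(-1)=2j-3$.

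\emph{Step 1 (locality).} Let the three branches taking part in the move be $a,b,c$. Outside a small disk containing the vanishing triangle, the curve is unchanged. Since the Alexander numbering is normalized at the unbounded region and changes only across the curve, the regions outside the disk keep their indices. So every double point other than $a\cap b$, $b\cap c$, $c\cap a$ keeps its index, and $d\StOne(\Omega_3)$ is the sum of the changes of the indices of these three vertices. The move gives a natural bijection between the three vertices before and after (the vertex $a\cap b$ before corresponds to $a\cap b$ after), so these changes are well defined.

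\emph{Step 2 (one vertex crossing one branch).} Fix the vertex $a\cap b$, which is opposite the side lying on $c$. During the move, this vertex passes through the branch $c$, while $a$ and $b$ cross no other branch near it. Before the move, the four regions around $a\cap b$ all lie on the same side of $c$ as the interior of the triangle. After the move, the four regions around the new $a\cap b$ lie on the other side of $c$. Crossing $c$ shifts the Alexander numbering of every one of these regions by the same amount: $+1$ if the crossing follows the coorientation of $c$, and $-1$ otherwise. Hence the index of $a\cap b$, being the average of these four regions, changes by exactly $\pm1$.

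\emph{Step 3 (sign).} The vertex moves from the interior side of $c$ to the exterior side, that is, in the direction pointing away from the triangle. So the change is $+1$ exactly when $c$ is cooriented outward from the vanishing triangle before the move, and $-1$ when it is cooriented inward. Applying this to all three vertices and summing gives $d\StOne(\Omega_3)=j-(3-j)=2j-3$.

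The main obstacle is Step 2. One has to justify carefully that the four quadrants at $a\cap b$ before and after the move correspond to each other, with each pair differing by a single crossing of $c$ only. I would do this by writing the move in a local model: $a,b,c$ are three lines in general position, and the move translates $c$ across the point $a\cap b$. In this model the quadrant correspondence and the uniform $\pm1$ shift can be read off directly. The same model shows that the triangle's coorientations all flip, which is consistent with the indices in Definition~\ref{def:indslice}.
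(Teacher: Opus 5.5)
Your proposal is correct and follows the same route as the paper. The paper's proof is exactly the count $d\StOne=j-(3-j)$: each outward edge of the vanishing triangle contributes $+1$ and each inward edge contributes $-1$. Your Steps 1--3 supply what the paper compresses into ``a direct computation'': the locality argument, and the $\pm1$ shift of each vertex's index as it crosses the opposite edge. One point to state explicitly is that for $b\cap c$ and $c\cap a$ you reuse the local model with the lines permuted. This is legitimate because the before and after pictures are determined up to an isotopy supported in the disk, and such an isotopy does not change the indices.
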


\begin{proof}

Let $j$ be the number of outward coorientations before the move.
A direct computation using the Alexander numbering on the slice plane
shows that the change of $\StOne$ is

\[
d\StOne=j-(3-j)=2j-3.
\]
Here $j$ denotes the number of outward edges of the vanishing triangle,
whereas $3-j$ denotes the number of inward edges.
These inward edges correspond to the directions in which the indices increase on the slice plane.    
\end{proof}

\begin{definition}[differential measure $d\StTwo$ for $\StTwo$]

We define

\[
d\StTwo
=
\StTwo(\text{after})-\StTwo(\text{before})
\]

for a quadruple-point event.

\end{definition}

We study local singular transitions of surface immersions through a slice plane.
Choose one of the sheets involved in the local deformation.
Its intersections with the other sheets form double lines,
which appear as plane curves on the slice.
From this viewpoint, a quadruple-point event corresponds to
a Reidemeister move $\Omega_3$ on the slice.

The following relation plays a central role.
It is not a definition but follows from a local comparison
of index contributions on the slice and in the ambient space.
We fix the sign convention so that the positive direction of a $\textup{Q}$-event
corresponds to an increase in the number of outward coorientations.
In the case of $\textup{Q}^2$, the number of outward coorientations does not change across the event.

\begin{theorem}\label{thm:DiffOneTwo}
Let $\underline{\mathbb{R}^2}$ be a slice plane taken immediately before
and after a quadruple-point event.

We fix the slice plane with its coorientation given by the positive direction of the height function.
Define
\[
\sgn(\underline{\mathbb{R}^2})=
\begin{cases}
-1 & \text{if the local transition is upward},\\
+1 & \text{if the local transition is downward}.
\end{cases}
\]

Then
\[
d\StTwo=d\StOne+\sgn(\underline{\mathbb{R}^2}).
\]
\end{theorem}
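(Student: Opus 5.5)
The plan is to localize everything to a small ball around the quadruple point and compare triple-point indices in $\mathbb{R}^3$ with double-point indices on the slice plane, contribution by contribution. Outside this ball the immersion is unchanged, so all triple points away from the event contribute equally to $\StTwo$ before and after. Hence $d\StTwo$ is the difference of the sums of $\ind(t)$ over the four triple points near the quadruple point.

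First I fix a model. Choose the distinguished sheet $P$, the one passing through the triple point of the other three, and arrange by the Morse position that the slice plane $\underline{\mathbb{R}^2}$ is parallel to $P$ near the event. The three remaining sheets cut $P$ and the nearby horizontal slices in three lines. These lines bound the vanishing triangle, and the event is precisely the $\Omega_3$ move of these lines on the slice. Each triple point of $S$ lying on $P$ corresponds to a double point of the slice curve. There are three such triple points, the pairwise intersections of the three other sheets with $P$. The fourth triple point, formed by the three sheets other than $P$, lies off $P$, on one side before the event and on the other side after.

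Next I compare indices. A triple point $t$ on $P$ has eight adjacent regions, which split into the four on the positive side of $P$ and the four on the negative side. The slice immediately above or below sees one of these quadruples. Their average is the index of the corresponding double point in the slice Alexander numbering, up to a uniform shift of $\pm\frac12$ coming from crossing $P$ (after normalizing the unbounded region). So summing over the three triple points on $P$, the change of their contribution equals $d\StOne$, computed by Lemma~\ref{lem:StOne}, because the uniform shifts cancel in the difference before/after.

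The remaining term is the fourth triple point. Its index is the average over the eight octants around it. Moving it from one side of $P$ to the other changes exactly those octants that are cut off by $P$, namely the region inside the small tetrahedron. Crossing $P$ changes the index by $\pm1$ according to the coorientation of $P$ relative to the height direction. Thus the contribution of this point changes by $\pm1$, and it remains to identify the sign with $\sgn(\underline{\mathbb{R}^2})$. An upward transition pushes the triple point across $P$ against its coorientation, giving $-1$; a downward transition gives $+1$. Adding the two pieces yields $d\StTwo=d\StOne+\sgn(\underline{\mathbb{R}^2})$.

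The main obstacle is this last bookkeeping. One must check that the shifts of $\pm\frac12$ for the three triple points on $P$ really cancel rather than contribute. One must also check that the fourth point changes by exactly one unit with the stated sign convention, consistent with the convention that the positive $\textup{Q}$ direction increases outward coorientations. I would verify this on the explicit tetrahedral model with all coorientations outward and then use symmetry. Reversing one coorientation shifts the before and after indices equally, so it does not affect the difference.
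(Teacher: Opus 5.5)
Your proof is correct in substance, and it takes a different route from the paper. The paper computes $d\StOne$ case by case ($j=0,1,2$ giving $-3,-1,+1$ through Lemma~\ref{lem:StOne}). It then asserts that the passage of the moving sheet adds $-1$ ``due to the fixed coorientation of the slice plane'', without saying which triple point produces this term. You instead split $d\StTwo$ over the four triple points near the event. The three on $P$ are the double points of the slice. A slice taken on a fixed side of $P$, closer to $P$ than $t_{123}$, sees exactly the four adjacent regions on that side, so these three indices differ from the slice indices by one and the same constant $\pm\frac12$ before and after. Their total change is therefore exactly $d\StOne$, for every choice of coorientations and with no case analysis. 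This pins the correction down: it is the unit jump in the index of $t_{123}$, the triple point of the other three sheets, which is invisible on the slice and crosses $P$. Your route thus derives the $\pm1$ that the paper only asserts. The same bookkeeping also gives Theorem~\ref{thm:SliceF} directly: each sheet contributes $+1$ or $-1$ according to whether it is outward or inward before the event.

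Three points in your write-up need repair.

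(i) The sign. $\ind(t_{123})$ drops by $1$ when $P$ moves in the direction of its own coorientation and rises by $1$ when it moves against it. So ``an upward transition pushes the triple point across $P$ against its coorientation'' holds only if $P$ is cooriented upward. You must state explicitly that the positive height direction is taken to be the coorientation of $P$. This is the convention under which $\sgn(\underline{\mathbb{R}^2})$ is meaningful, and the paper's proof uses it tacitly as well: its use of $d\StOne=2j-3$ with the four-sheet count $j$ requires $P$ to be inward before the event.

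(ii) The octant claim. It is not only the tetrahedral octant that changes. After the event all eight regions around $t_{123}$ lie on the other side of $P$, so each of their indices shifts by the same $\pm1$, and that is why the average shifts by $\pm1$. A single octant changing would move the average by only $\pm\frac18$.

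(iii) The closing symmetry reduction is false. Reversing the coorientation of one of the other three sheets does change the before/after difference: it changes both $d\StTwo$ and $d\StOne$ by the same $\pm2$, so only $d\StTwo-d\StOne$ is preserved. Reversing the coorientation of $P$ flips the sign of the jump. For example, in the all-outward model with $P$ moving upward, $P$ is cooriented downward and the jump is $+1$. You do not need this reduction anyway, since your decomposition already handles arbitrary coorientations once the sign is expressed relative to the coorientation of $P$.
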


\begin{proof}
It suffices to consider quadruple-point events
for which the number of outward coorientations increases.

By symmetry, we may assume that the moving sheet passes the triple-point configuration
in the upward direction.

Viewing the configuration from above along the height direction of the moving sheet,
the induced slice transition is a Reidemeister move of type $\Omega_3$,
where the vanishing triangle is formed by the intersections of the moving sheet
with the other three sheets.

In this description, the number of outward coorientations of sheets
corresponds to the number of edges of the vanishing triangle
whose orientation agrees with the direction of increasing index
on the slice plane.
Thus the same parameter can be interpreted combinatorially both
for the surface and for the induced plane curve.

We examine the two cases for which the number of outward coorientations increases,
and the case $\textup{Q}^2$.

\smallskip
\noindent
 (1) Suppose that the number of outward coorientations before the event is $j=0$.
This is an event of type $\textup{Q}^{4-j}=\textup{Q}^4$.
The induced slice move is an $\Omega_3$-move in which the corresponding
vanishing-triangle data changes from $0$ to $3$, and one obtains
\[
d\StOne=-3.
\]

\smallskip
\noindent
 (2) Suppose that the number of outward coorientations before the event is $j=1$.
This is an event of type $\textup{Q}^{4-j}=\textup{Q}^3$.
The induced slice move is an $\Omega_3$-move in which the corresponding
vanishing-triangle data changes from $1$ to $2$, and one obtains
\[
d\StOne=-1.
\]

\smallskip
\noindent
(3) Suppose that the number of outward coorientations before the event is $j=2$.
This is an event of type $\textup{Q}^{4-j}=\textup{Q}^2$. 
The induced slice move is an $\Omega_3$-move in which the corresponding
vanishing-triangle data changes from $2$ to $1$, and one obtains
\[
d\StOne=+1.
\]

\smallskip
\noindent
In all cases this agrees with the formula
\[
d\StOne = 2j-3,
\]
where $j$ denotes the number of outward coorientations before the move.

Next, we compare this with the change of $\StTwo$.
Consider the local tetrahedral configuration
(Figure~\ref{fig:Qmove}).
Although the upward motion of the moving sheet increases
the number of sheets with outward coorientation by one,
the corresponding contribution to the differential measure is $-1$
due to the fixed coorientation of the slice plane and
the direction of the local transition.  

Therefore,
\[
\StTwo(\text{after})-\StTwo(\text{before})
= d\StOne - 1.
\]

Finally, by the definition of the sign of the slice plane,
this correction term coincides with $\sgn(\underline{\mathbb{R}^2})$.
Hence we obtain
\[
d\StTwo = d\StOne + \sgn(\underline{\mathbb{R}^2}),
\]
as claimed.
\end{proof}

\begin{remark}[Sign convention for the slice plane]
The sign of the slice plane is fixed so that
\[
d\StTwo = d\StOne + \sgn(\underline{\mathbb{R}^2})
\]
holds consistently with the orientation of the local tetrahedral model.
With the slice plane fixed, an upward passage of the moving sheet contributes $-1$.
\end{remark}

\begin{theorem}[Strangeness slice formula]\label{thm:SliceF}
Let $j$ be the number of outward coorientations of the sheets
before the event, so that the event is of type $\textup{Q}^{4-j}$.
Then
\[
d\StTwo = 2j-4.
\]
\end{theorem}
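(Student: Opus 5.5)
The plan is to combine Theorem~\ref{thm:DiffOneTwo} with Lemma~\ref{lem:StOne}. First I reduce to a normal form. As in the proof of Theorem~\ref{thm:DiffOneTwo}, it suffices to treat $\textup{Q}$-events oriented in the positive direction, i.e.\ those in which the number of outward coorientations increases. Reversing the event negates $d\StTwo$ and replaces $j$ by $4-j$, and $2(4-j)-4=-(2j-4)$, so the formula is compatible with reversal. By the symmetry used in that proof, I then place the tetrahedral configuration in Morse position so that the moving sheet crosses the triple point of the other three sheets in the upward direction. With the slice plane cooriented by the positive height direction, this gives $\sgn(\underline{\mathbb{R}^2})=-1$.

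Next I read off the slice. The slice plane taken immediately before and after the event meets the moving sheet and the other three sheets, and the resulting transition is an $\Omega_3$-move whose vanishing triangle is cut out by the moving sheet. The key identification, already used in the proof of Theorem~\ref{thm:DiffOneTwo}, is that the number $j$ of outward coorientations among the sheets before the event equals the number of outward edges of the vanishing triangle before the move. This holds because each edge inherits its coorientation, and hence the direction in which the Alexander numbering increases on the slice, from the corresponding sheet. Lemma~\ref{lem:StOne} then gives $d\StOne=2j-3$.

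Substituting into Theorem~\ref{thm:DiffOneTwo} yields
\[
d\StTwo=d\StOne+\sgn(\underline{\mathbb{R}^2})=(2j-3)-1=2j-4 .
\]
As a check, I would run through the three cases $j=0,1,2$, corresponding to $\textup{Q}^4$, $\textup{Q}^3$ and $\textup{Q}^2$, which give $-4$, $-2$ and $0$. The value for $\textup{Q}^2$ is consistent with the remark that $j$ does not change across that event, and hence $d\StTwo(\textup{Q}^2)=0$.

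The main obstacle is the identification of $j$ with the count of outward edges of the vanishing triangle. The sphere in the definition of $\textup{Q}^j$ is the tetrahedron, which has four faces, while the triangle has only three edges, so the face carried by the moving sheet itself must be accounted for separately. To handle this, I would compute the eight local Alexander indices around the quadruple point directly and check that the moving sheet's own coorientation is exactly what is absorbed into the $\sgn$ correction term.
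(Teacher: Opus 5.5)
Your proposal is correct and follows the paper's own proof: Lemma~\ref{lem:StOne} gives $d\StOne=2j-3$ for the induced $\Omega_3$-move, and the slice-plane sign from Theorem~\ref{thm:DiffOneTwo} adds $-1$, so $d\StTwo=2j-4$. The obstacle you flag is settled by your normalization. Take as moving sheet one whose face of the vanishing tetrahedron is inward-cooriented, which exists since $j\le 2$. Then the three edges of the vanishing triangle carry all $j$ outward coorientations, and the $-1$ is exactly the index change of the fourth triple point, the one not lying on the moving sheet.
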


\begin{proof}
For a quadruple-point event the induced slice transition
is a Reidemeister move $\Omega_3$.
By Lemma~\ref{lem:StOne},
\[
d\StOne = 2j-3,
\]
where $j$ is the number of outward coorientations before the event.

Since the slice plane crosses the tetrahedral configuration once,
the resulting sign contribution is $-1$,
as determined by the local orientation convention on the slice.
Therefore
\[
d\StTwo=(2j-3)-1=2j-4.
\]
\end{proof}

\section{Table of Local Changes}\label{sec:table}

Table~\ref{StTwo^1} summarizes the local changes of the invariant $\StTwo$
under codimension-one events.
These values follow from the definitions of the indices
and the combinatorial structure of the singular configurations.

In Table~\ref{StTwo^1} we record the local contributions
to the invariant $\StTwo$ arising from different cells
in the tetrahedral decomposition of a neighborhood
of a codimension-one event.

More precisely, the notation
\[
\StTwo(S;\sigma^k)
\]
denotes the contribution coming from the $k$-dimensional
cells ($k=0,1,2,3$) in this local decomposition.
Here $\sigma^0$ corresponds to vertices (triple points),
$\sigma^1$ to double lines,
$\sigma^2$ to sheets,
and $\sigma^3$ to regions.

The invariant $\StTwo(S)$ itself is defined as the sum
of the indices of triple points (i.e.\ the $0$-cell
contribution), but the decomposition into
$\sigma^k$-terms is convenient for describing
local changes across codimension-one events.

\begin{table}[htbp]
  \centering
  \caption{Local contributions to the change of $\StTwo$
under codimension-one events.}\label{StTwo^1}
  \begin{tabular}{|l|c|c|c|c|}
    \hline
    & $\mathrm{St}_{(2)}(S;\sigma^{0})$ 
    & $\mathrm{St}_{(2)}(S;\sigma^{1})$
    & $\mathrm{St}_{(2)}(S;\sigma^{2})$
    & $\mathrm{St}_{(2)}(S;\sigma^{3})$ \\
    \hline
    $E^{0}$ & $0$ & $i+1$ & $2i+3$ & $i+2$ \\
    \hline
    $E^{1}$ & $0$ & $i$ & $2i$ & $i$ \\
    \hline
    $E^{2}$ & $0$ & $i-1$ & $2i-3$ & $i-2$ \\
    \hline
    $T^{0}$ & $2i+3$ & $3(2i+3)$ & $3(2i+3)$ & $2i+3$ \\
    \hline
    $T^{1}$ & $2i+1$ & $3(2i+1)$ & $3(2i+1)$ & $2i+1$ \\
    \hline
    $T^{2}$ & $2i-1$ & $3(2i-1)$ & $3(2i-1)$ & $2i-1$ \\
    \hline
    $T^{3}$ & $2i-3$ & $3(2i-3)$ & $3(2i-3)$ & $2i-3$ \\
    \hline
    $H^{+}$ & $0$ & $0$ & $0$ & $0$ \\
    \hline
    $H^{-}$ & $0$ & $0$ & $0$ & $-2$ \\
    \hline
    $Q^{4}$ & $-4$ & $-12$ & $-12$ & $-4$ \\
    \hline
    $Q^{3}$ & $-2$ & $-6$ & $-6$ & $-2$ \\
    \hline
    $Q^{2}$ & $0$ & $0$ & $0$ & $0$ \\
    \hline
  \end{tabular}
  \end{table}

\section{Example}\label{sec:example}
\begin{example}[Repeated $\textup{Q}^3$ events]\label{ex:Q3braid}
We describe a braid-like construction realizing repeated quadruple-point events of type $\textup{Q}^3$.

Take three sheets arranged so that they form a prism-like configuration
with a fixed triple-point structure in each slice.
Assume that exactly one of the three sheets has the opposite coorientation
to the other two.

Now let a fourth sheet pass transversely through this configuration.
Then exactly one of the four sheets has outward coorientation before the event,
so that the resulting event is of type $\textup{Q}^{4-j}$ with $j=1$,
i.e.\ of type $\textup{Q}^3$.

By Theorem~\ref{thm:SliceF}, each event contributes
\[
d\StTwo = 2\cdot 1 - 4 = -2,
\]
since $j=1$ before the event.
Reversing the direction of the motion yields $d\StTwo = 2$.

Repeating this construction produces arbitrarily many such
quadruple-point events.  
If one wishes to keep the surface connected,
the components can be joined by connected sums away from the local
configuration.
\end{example}

\begin{remark}[Range of the differential measure]\label{rem:range}
Since every local change of $\StTwo$ has the form $2j-4$,
the differential measure always takes values in $2\mathbb Z$.
On the other hand, Example~\ref{ex:Q3braid} realizes the minimal nonzero
value $|d\StTwo|=2$, and reversing the direction of the motion changes the sign.  

Therefore, by concatenating such local motions,
the total differential measure realizes every even integer.
Hence every even integer can be realized as the total differential measure.
\end{remark}

\bibliographystyle{plain}
\bibliography{SliceRef}

\end{document}